\DeclareFontFamily{OT2}{cmr}{\hyphenchar\font45 }
\DeclareFontShape{OT2}{cmr}{m}{n}{
<5><6><7><8><9>gen*wncyr
<10><10.95><12><14.4><17.28><20.74><24.88>wncyr10}{}
\DeclareFontShape{OT2}{cmr}{b}{n}{
<5><6><7><8><9>gen*wncyb
<10><10.95><12><14.4><17.28><20.74><24.88>wncyb10}{}
\DeclareMathAlphabet{\mathcyr}{OT2}{cmr}{m}{n}
\DeclareMathAlphabet{\mathcyb}{OT2}{cmr}{b}{n}
\SetMathAlphabet{\mathcyr}{bold}{OT2}{cmr}{b}{n}
\newtheorem{theoremcounter}{Theorem Counter}[section]
\theoremstyle{plain}
\newtheorem{theorem}[theoremcounter]{Theorem}
\newtheorem{proposition}[theoremcounter]{Proposition}
\newtheorem{lemma}[theoremcounter]{Lemma}
\theoremstyle{remark}
\newcommand{\fH}{\mathfrak{H}}
\newcommand{\bk}{\boldsymbol{k}}
\newcommand{\bl}{\boldsymbol{l}}
\newcommand{\Li}{\mathrm{Li}}
\newcommand{\QQ}{\mathbb{Q}}
\newcommand{\RR}{\mathbb{R}}
\newcommand{\ZZ}{\mathbb{Z}}
\newcommand{\wt}{\mathrm{wt}}
\newcommand{\sh}{\mathbin{\mathcyr{sh}}}
\numberwithin{equation}{section}
\address{Department of Mathematical Sciences, Aoyama Gakuin University, 5-10-1 Fuchinobe, Chuo-ku, Sagamihara, Kanagawa, 252-5258, Japan}
\email{seki@math.aoyama.ac.jp}
\thanks{This research was supported by JSPS KAKENHI Grant Number 21K13762.}
\keywords{Multiple harmonic sum, Multiple zeta value, Extended double shuffle relation}
\subjclass[2020]{11M32}
\title[A proof of the EDSR without using integrals]{A proof of the extended double shuffle relation without using integrals}
\author{Shin-ichiro Seki}
\date{}
\begin{document}

\maketitle

\begin{abstract}
We present a new proof of the extended double shuffle relation for multiple zeta values which notably does not rely on the use of integrals.
This proof is based on a formula recently obtained by Maesaka, Watanabe, and the author.
\end{abstract}

\section{Introduction}
The \emph{multiple zeta value} (= MZV) $\zeta(\bk)$ is defined as
\[
\zeta(\bk)\coloneqq\sum_{0<n_1<\cdots<n_r}\frac{1}{n_1^{k_1}\cdots n_r^{k_r}},
\]
where $\bk=(k_1,\dots, k_r)$ is a tuple of positive integers, and $k_r>1$ to ensure convergence.
Such a tuple $\bk$ is called an \emph{admissible index}.
The MZV can be expressed in terms of two kinds of limits, as described below.
Let $N$ be a positive integer and $z$ a real number satisfying $0<z<1$.
Let $\bk=(k_1,\dots, k_r)$ be a tuple of positive integers that allows for $k_r=1$, which is referred to as an \emph{index}.
We call $\wt(\bk)\coloneqq k_1+\cdots+k_r$ its \emph{weight}.
Then the \emph{multiple harmonic sum} $\zeta_{<N}^{}(\bk)$ is defined as
\[
\zeta_{<N}^{}(\bk)\coloneqq\sum_{0<n_1<\cdots<n_r<N}\frac{1}{n_1^{k_1}\cdots n_r^{k_r}}
\]
and the \emph{multiple polylogarithm} $\mathrm{Li}_{\bk}(z)$ is defined as
\[
\Li_{\bk}(z)\coloneqq\int_{0<t_1<\cdots<t_k<z}\omega_{u_1}(t_1)\cdots\omega_{u_k}(t_k),
\]
where $k\coloneqq\wt(\bk)$, $\omega_0(t)\coloneqq\frac{\mathrm{d}t}{t}$, $\omega_1(t)\coloneqq\frac{\mathrm{d}t}{1-t}$, and if $i\in J(\bk)$, then $u_i=1$; otherwise, $u_i=0$.
Here, $J(\bk)$ is defined as $J(\bk)\coloneqq\{1,k_1+1, k_1+k_2+1,\dots, k_1+\cdots+k_{r-1}+1\}$.
If $\bk$ is admissible, then it is known that the following two limit expressions can be obtained (the first is trivial by definition):
\[
\lim_{N\to\infty}\zeta_{<N}^{}(\bk)=\zeta(\bk)=\lim_{z\to1}\mathrm{Li}_{\bk}(z).
\]
This two-sided nature, expressed by both the series and the iterated integral, gives rise to the rich structures of the space generated by the MZVs.
To describe the \emph{double shuffle relation} (= DSR), which is a prototypical example of such structures, we prepare the language of the so-called \emph{Hoffman algebras}.

Let $\QQ\langle e_0, e_1\rangle$ be a non-commutative polynomial algebra in two variables $e_0$ and $e_1$.
We define $\fH^1$ as $\QQ+e_1\QQ\langle e_0, e_1\rangle$ and its subspace $\fH^0$ as $\QQ+e_1\QQ\langle e_0, e_1\rangle e_0$.
Let $e_k\coloneqq e_1e_0^{k-1}$ for each positive integer $k$.
For each index $\bk=(k_1,\dots, k_r)$, $e_{\bk}\coloneqq e_{k_1}\cdots e_{k_r}\in\fH^1$.
If $\bk$ is admissible, then $e_{\bk}\in\fH^0$.
We define a $\QQ$-linear map $Z\colon\fH^0\to\RR$ by $Z(1)=1$ and $Z(e_{\bk})=\zeta(\bk)$.
The \emph{harmonic product} $*$ on $\fH^1$ is defined by rules $w*1=1*w=w$ for any word $w\in\fH^1$, and 
\[
we_{k_1}*w'e_{k_2}=(w*w'e_{k_2})e_{k_1}+(we_{k_1}*w')e_{k_2}+(w*w')e_{k_1+k_2}
\]
for any words $w, w'\in\fH^1$, $k_1, k_2\in\ZZ_{>0}$, with $\QQ$-bilinearity.
The \emph{shuffle product} $\sh$ on $\QQ\langle e_0, e_1\rangle$ is defined by rules $w\sh 1=1\sh w=w$ for any word $w\in\QQ\langle e_0, e_1\rangle$, and
\[
wu_1\sh w'u_2=(w\sh w'u_2)u_1+(wu_1\sh w')u_2
\]
for any words $w, w'\in\QQ\langle e_0, e_1\rangle$, $u_1, u_2\in\{e_0, e_1\}$, with $\QQ$-bilinearity.
Then $\fH^1$ becomes a commutative $\QQ$-algebra with respect to $*$ (resp.~$\sh$), which is denoted by $\fH^1_*$ (resp.~$\fH^1_{\sh}$).
The subspace $\fH^0$ of $\fH^1$ is closed under $*$ (resp.~$\sh$) and becomes a $\QQ$-subalgebra of $\fH^1_*$ (resp.~$\fH^1_{\sh}$), which is denoted by $\fH^0_{*}$ (resp.~$\fH^0_{\sh}$).

A given product of two MZVs can be expressed in two distinct ways as a linear combination of MZVs, utilizing their series and integral expressions, respectively.
The products $*$ and $\sh$ have been defined earlier to ensure that the following holds (the harmonic product corresponds to the decomposition by series, and the shuffle product corresponds to the decomposition by integrals): for any $w, w'\in\fH^0$,
\[
Z(w*w')=Z(w)Z(w')=Z(w\sh w').
\]
The relation among MZVs
\[
Z(w*w'-w\sh w')=0
\]
is called the double shuffle relation (= DSR).
The DSR provides a broad family of relations among MZVs, yet there exist relations, such as Euler's $\zeta(3)=\zeta(1,2)$, that cannot be derived from the DSR.
Let us briefly review the content of the \emph{extended double shuffle relation} (= EDSR) obtained by Ihara, Kaneko, and Zagier to overcome this.

Let $T$ be an indeterminate and $\RR[T]$ the usual polynomial ring.
Then there exist two algebra homomorphisms $Z^*\colon\fH^1_*\to\RR[T]$ and $Z^{\sh}\colon\fH^1_{\sh}\to\RR[T]$ that are uniquely characterized by the properties that both extend $Z\colon\fH^0\to\RR$ and send $e_1$ to $T$ (\cite[Proposition~1]{IharaKanekoZagier2006}).
We also use $Z_{\bk}^*(T)\coloneqq Z^*(e_{\bk})$ and $Z_{\bk}^{\sh}(T)\coloneqq Z^{\sh}(e_{\bk})$ for an index $\bk$.
By using these regularized polynomials, it is possible to describe the asymptotic behavior of $\zeta_{<N}^{}(\bk)$ as $N \to\infty$ and the asymptotic behavior of $\Li_{\bk}(z)$ as $z\to 1$, even when the index $\bk$ is not admissible.
The notation $\log^{\bullet}N$ implies the existence of some positive integer $a$ that does not depend on $N$, represented as $\log^aN$.
Similarly for $\log^{\bullet}(1-z)$.
Furthermore, let $\gamma$ denote Euler's constant.
\begin{proposition}[Ihara--Kaneko--Zagier~\cite{IharaKanekoZagier2006}]\label{prop:asymp}
For any index $\bk$, we have
\begin{equation}\label{eq:asympH}
\zeta_{<N}^{}(\bk)=Z^*_{\bk}(\log N+\gamma)+O(N^{-1}\log^{\bullet}N),\quad \text{as } N\to\infty
\end{equation}
and
\begin{equation}\label{eq:asympLi}
\mathrm{Li}_{\bk}(z)=Z^{\sh}_{\bk}(-\log(1-z))+O((1-z)|\log^{\bullet}(1-z)|),\quad \text{as } z\to1.
\end{equation}
\end{proposition}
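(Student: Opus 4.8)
The plan is to derive \eqref{eq:asympH} and \eqref{eq:asympLi} by one common strategy: reduce each, via a polynomial structure theorem and a multiplicativity property of the relevant truncation, to the case of an admissible index, and then estimate that case by elementary means. For \eqref{eq:asympH}, recall Hoffman's structure theorem (the content underlying \cite[Proposition~1]{IharaKanekoZagier2006}) that $\fH^1_*=\fH^0_*[e_1]$, so that $e_{\bk}=\sum_{j\ge 0}w_j*e_1^{*j}$ for uniquely determined $w_j\in\fH^0$, whence $Z^*_{\bk}(T)=\sum_j Z(w_j)T^j$. The $\QQ$-linear map $Z_{<N}\colon\fH^1\to\RR$ with $Z_{<N}(1)=1$ and $Z_{<N}(e_{\bl})=\zeta^{}_{<N}(\bl)$ is an algebra homomorphism on $\fH^1_*$: this is precisely the combinatorial identity that motivates the definition of $*$ (split $\zeta^{}_{<N}(\bk)\,\zeta^{}_{<N}(\bl)$ according to the order, equalities permitted, of the summation indices), and it uses no integral. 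Hence $\zeta^{}_{<N}(\bk)=\sum_j\zeta^{}_{<N}(w_j)\,H_{N-1}^{\,j}$ with $H_{N-1}:=\sum_{n=1}^{N-1}n^{-1}=\log N+\gamma+O(N^{-1})$; and for admissible $w=e_{\bl}\in\fH^0$, writing $\bl=(\dots,k_s)$ with $k_s\ge 2$, the tail $Z(w)-\zeta^{}_{<N}(w)=\sum_{n\ge N}n^{-k_s}\zeta^{}_{<n}(\dots)$ is bounded by $\sum_{n\ge N}n^{-2}\log^{\bullet}n=O(N^{-1}\log^{\bullet}N)$. Substituting, and using that $H_{N-1}^{\,j}=(\log N+\gamma)^j+O(N^{-1}\log^{\bullet}N)$ and that $O(N^{-1}\log^{\bullet}N)$ times a fixed power of $\log N$ is again $O(N^{-1}\log^{\bullet}N)$, yields \eqref{eq:asympH}; no integral appears.

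\emph{Reducing \eqref{eq:asympLi}.} The same strategy applies with $(*,\zeta^{}_{<N},H_{N-1})$ replaced by $(\sh,\Li_{(\cdot)}(z),-\log(1-z))$. The shuffle structure theorem $\fH^1_{\sh}=\fH^0_{\sh}[e_1]$ gives $e_{\bk}=\sum_j v_j\sh e_1^{\sh j}$ with $v_j\in\fH^0$, hence $Z^{\sh}_{\bk}(T)=\sum_j Z(v_j)T^j$. If the $\QQ$-linear extension of $e_{\bl}\mapsto\Li_{\bl}(z)$ to $\fH^1$ is an algebra homomorphism on $\fH^1_{\sh}$, with $e_1\mapsto\Li_{(1)}(z)=-\log(1-z)$, then $\Li_{\bk}(z)=\sum_j\Li_{v_j}(z)\,(-\log(1-z))^j$, and \eqref{eq:asympLi} reduces to the single admissible estimate
\[
\Li_v(z)=Z(v)+O\!\left((1-z)\log^{\bullet}(1-z)\right)\quad(z\to 1),\qquad v\in\fH^0 .
\]

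\emph{Removing the integrals.} The two inputs just used -- shuffle-multiplicativity of $\Li_{(\cdot)}(z)$, and the admissible tail estimate -- are exactly where the classical proof reads off properties of the iterated integral; to keep the argument integral-free we extract them instead from the formula of Maesaka, Watanabe, and the author, which presents $\Li_{\bk}(z)$ as a limit of explicit discretised multiple harmonic sums. Writing, for $N\in\ZZ_{>0}$ (and depending also on $z$) and extending $\QQ$-linearly,
\[
\widetilde\zeta_N(\bk):=\sum_{0<n_1<\cdots<n_k<zN}\ \prod_{i=1}^{k}\frac{1}{\mu_i},\qquad \mu_i:=\begin{cases}N-n_i,& i\in J(\bk),\\[2pt] n_i,& i\notin J(\bk),\end{cases}
\]
the formula reads $\Li_{\bk}(z)=\lim_{N\to\infty}\widetilde\zeta_N(\bk)$, and at $z=1$ it specialises to $Z(v)=\Li_v(1)=\lim_N\widetilde\zeta_N(v)$. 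On these finite sums the approximate shuffle identity $\widetilde\zeta_N(\bk)\,\widetilde\zeta_N(\bl)=\widetilde\zeta_N(\bk\sh\bl)+O(N^{-1}\log^{\bullet}N)$ holds -- the error consisting only of the ``collision'' contributions in which the two tuples share a coordinate, and hence have one fewer free summation variable -- which is a purely combinatorial statement; passing to $N\to\infty$ gives the shuffle-multiplicativity of $\Li_{(\cdot)}(z)$. For the tail estimate, when $v$ is admissible the last letter of its word is $e_0$, so $k\notin J(\bk)$ and $\mu_k=n_k$, and comparing $\widetilde\zeta_N(v)$ (cut off at $zN$, i.e. the discretisation of $\Li_v(z)$) with the analogous sum cut off at $N$ (that of $Z(v)=\Li_v(1)$) bounds $Z(v)-\Li_v(z)$ by the difference $\sum_{zN\le n_k<N}n_k^{-1}(\cdots)$, which is $O((1-z)\log^{\bullet}(1-z))$ -- again with no integral.

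\emph{The main obstacle.} The delicate part is quantitative bookkeeping on the polylog side: every error in the derivation of \eqref{eq:asympLi} must survive at the order $O((1-z)\log^{\bullet}(1-z))$, which forces the discretisation scale $N$ in the Maesaka--Watanabe--Seki formula to be coupled to $z$ -- heuristically $N\asymp(1-z)^{-1}$ -- so that the collision terms, the discrepancy caused by cutting off at $zN$ rather than $N$, and the intrinsic finite-$N$ error all add up to the required bound, and this has to be done uniformly in $\wt(\bk)$, where each additional unit of weight introduces a further power of $\log N=\log(1-z)^{-1}$. The harmonic side \eqref{eq:asympH}, by contrast, is essentially formal once one observes that $\zeta^{}_{<N}$ is a homomorphism for $*$.
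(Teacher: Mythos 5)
This proposition is quoted in the paper from Ihara--Kaneko--Zagier without proof (and the paper's argument deliberately uses only \eqref{eq:asympH}, not \eqref{eq:asympLi}), so the comparison is really with the proof in [IKZ]. Your treatment of \eqref{eq:asympH} is correct and is essentially that proof: decompose $e_{\bk}=\sum_j w_j*e_1^{*j}$ via $\fH^1_*=\fH^0_*[e_1]$, use that $\zeta_{<N}$ is a $*$-homomorphism with $\zeta_{<N}(e_1)=H_{N-1}=\log N+\gamma+O(N^{-1})$, and bound the tails $\zeta(\bl)-\zeta_{<N}(\bl)=\sum_{n\geq N}n^{-k_s}\zeta_{<n}(\cdots)=O(N^{-1}\log^{\bullet}N)$ for admissible $\bl$. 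No complaints there.

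Your proof of \eqref{eq:asympLi}, however, has a genuine gap, and you half-admit it in your final paragraph. The reduction to (a) shuffle-multiplicativity of $\bl\mapsto\Li_{\bl}(z)$ and (b) the admissible estimate $\Li_v(z)=Z(v)+O((1-z)\log^{\bullet}(1-z))$ is fine and again matches [IKZ], but your proposed integral-free derivations of (a) and (b) via the discretisation $\widetilde\zeta_N$ are not carried out. For (a), the approximate shuffle identity holds at each finite $N$ with an $O(N^{-1}\log^{\bullet}N)$ error, and letting $N\to\infty$ at \emph{fixed} $z$ does give multiplicativity -- that part is salvageable -- but for (b) you compare the sum cut off at $zN$ with the sum cut off at $N$ and declare the difference to be $O((1-z)\log^{\bullet}(1-z))$; this conflates a fixed-$N$ difference with the difference of two separate limits ($\Li_v(z)=\lim_N\widetilde\zeta_N(v)$ versus $Z(v)=\lim_N$ of the other sum), and the required uniformity in $N$ versus $z$ -- which you correctly identify as ``the main obstacle'' -- is exactly the step that is missing. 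Without it the error term in \eqref{eq:asympLi} is not established. Note also that the detour is unnecessary: since $\Li_{\bk}(z)$ is defined by an iterated integral anyway, there is nothing to be gained by avoiding integrals here, and the admissible estimate (b) follows elementarily from the power-series expansion $\Li_v(z)=\sum z^{n_r}/(n_1^{k_1}\cdots n_r^{k_r})$ with $k_r\geq2$, using $1-z^{n}\leq\min(1,n(1-z))$ and splitting the sum at $n_r\approx(1-z)^{-1}$; that is the clean way to finish, and it is in the spirit of [IKZ].
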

By comparing these two kinds of asymptotic behaviors through analysis using the gamma function, they have proved the following fundamental theorem.
\begin{theorem}[{Regularization theorem; Ihara--Kaneko--Zagier~\cite[Theorem~1]{IharaKanekoZagier2006}}]\label{thm:regularization}
For any index $\bk$, we have
\begin{equation}\label{eq:regularization}
Z_{\bk}^{\sh}(T)=\rho(Z_{\bk}^*(T)),
\end{equation}
where $\rho\colon\RR[T]\to\RR[T]$ is the $\RR$-linear map defined in \cite[(2.2)]{IharaKanekoZagier2006}.
\end{theorem}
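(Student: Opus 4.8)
The plan is to derive Theorem~\ref{thm:regularization} from Proposition~\ref{prop:asymp} by using the discrete formula of Maesaka, Watanabe and the author to put the multiple polylogarithm --- hence the shuffle regularization $Z^{\sh}_{\bk}$ --- on exactly the same footing as the multiple harmonic sum $\zeta_{<N}^{}(\bk)$, which governs $Z^{*}_{\bk}$. In the original argument of Ihara--Kaneko--Zagier the bridge between the two regularizations is a computation with the Beta integral; here that step is to be replaced by the elementary asymptotics of factorials (Euler's product for the gamma function, equivalently Stirling's formula), which is the only analytic input the discrete formula requires.

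First I would record the finite model. Let $L_N(\bk;z)$ be the finite sum attached to the index $\bk$ and the truncation parameter $N$ by the Maesaka--Watanabe--Seki formula: a $\QQ$-linear combination of expressions of multiple-harmonic-sum type, built from the factors $1/n$ and from a combinatorial kernel (a ``connector'') encoding the truncation, with $\lim_{N\to\infty}L_N(\bk;z)=\Li_{\bk}(z)$ for $0<z<1$. Building the approach $z\to1$ into the truncation --- equivalently, fixing a sequence $z=z_N\uparrow1$ along which the convergence is uniform --- equation~\eqref{eq:asympLi} then gives the asymptotic expansion $L_N(\bk;z_N)=Z^{\sh}_{\bk}\bigl(-\log(1-z_N)\bigr)+o(1)$ as $N\to\infty$, a polynomial in $-\log(1-z_N)$ with real coefficients. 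This is the ``shuffle side''.

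Next I would read off a ``harmonic side'' from the very same sums $L_N(\bk;z_N)$. Partial-fraction expansion of the connector, possibly after the re-indexing $n\mapsto N-n$, rewrites $L_N(\bk;z_N)$ as a $\QQ$-linear combination of products $\zeta_{<N}^{}(\bk')\cdot S_N\cdot N^{-a}$, where each $S_N$ is a convergent multiple sum whose limit is a polynomial in the Riemann zeta values $\zeta(2),\zeta(3),\dots$ and the terms with $a>0$ are negligible --- the ``diagonal'' contractions created by the partial fractions being exactly what gets absorbed into these $N^{-1}$-terms. Applying equation~\eqref{eq:asympH} to the factors $\zeta_{<N}^{}(\bk')$ turns this into an expansion in the $Z^{*}_{\bk'}(\log N+\gamma)$ and the $\zeta(n)$. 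Passing to generating series in an auxiliary variable $u$, over all indices of a fixed weight, the total contribution of the connector assembles into $\exp\bigl(\sum_{n\ge2}\tfrac{(-1)^n}{n}\zeta(n)u^n\bigr)=e^{\gamma u}\,\Gamma(1+u)$; this is the point at which the asymptotics of factorials enters, and it is the integral-free substitute for the Beta-integral step of \cite{IharaKanekoZagier2006}. Since this series is the one defining the map $\rho$ of \cite[(2.2)]{IharaKanekoZagier2006}, comparing the two asymptotic expansions of $L_N(\bk;z_N)$ coefficient by coefficient in $\log N$ (and in $\gamma$) yields $Z^{\sh}_{\bk}(T)=\rho(Z^{*}_{\bk}(T))$.

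The hard part will be the combinatorial heart of the third step: pinning down precisely which $\QQ$-linear combination of products $\zeta_{<N}^{}(\bk')\cdot S_N$ the connector produces, and checking that in the limit the factors $S_N$ are organized exactly by the series $e^{\gamma u}\Gamma(1+u)$. A secondary but genuine technical point is the double scaling $N\to\infty$ with $z=z_N\to1$: one must ensure that the error in the Maesaka--Watanabe--Seki formula and the error $O\bigl((1-z)\log^{\bullet}(1-z)\bigr)$ in~\eqref{eq:asympLi} are uniform enough to legitimize reading off the coefficients of the resulting polynomial identity in $T$. Once these are in place, no use of integrals remains.
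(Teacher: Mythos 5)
A preliminary but important point: the paper does not prove Theorem~\ref{thm:regularization} at all. It is quoted from \cite[Theorem~1]{IharaKanekoZagier2006}, where it is obtained by comparing \eqref{eq:asympH} with \eqref{eq:asympLi} through gamma-function analysis; in the present paper it serves only as background, and the main results (Theorem~\ref{thm:main} and the deduction of \eqref{eq:EDSR*}) deliberately avoid both \eqref{eq:asympLi} and the regularization theorem. So there is no proof in the paper to compare against, and your proposal must stand on its own.

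As a free-standing argument it is a programme rather than a proof, and the gaps sit exactly where the content is. First, Theorem~\ref{thm:MSW} as available here is the identity $\zeta_{<N}^{}(\bk)=\zeta_{<N}^{\flat}(\bk)$, with no free parameter $z$; the parametrized finite model $L_N(\bk;z)$ with $\lim_{N\to\infty}L_N(\bk;z)=\Li_{\bk}(z)$, together with error control uniform enough to run the double limit $z=z_N\uparrow 1$, $N\to\infty$, and to legitimize reading off coefficients of a polynomial in $-\log(1-z_N)$ against one in $\log N$, is an additional construction you would have to build and estimate from scratch. (Invoking \eqref{eq:asympLi} is also at odds with the integral-free aim, since that asymptotic is itself established in \cite{IharaKanekoZagier2006} by manipulating iterated integrals near $z=1$.) Second, and decisively, the step you yourself label ``the hard part'' --- that the connector's contribution, in generating-series form, assembles into $\exp\bigl(\sum_{n\ge2}\tfrac{(-1)^n}{n}\zeta(n)u^n\bigr)=e^{\gamma u}\Gamma(1+u)$, i.e.\ into the series defining $\rho$ --- \emph{is} the theorem. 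Saying that partial fractions produce terms $\zeta_{<N}^{}(\bk')\cdot S_N\cdot N^{-a}$ whose limits are ``organized exactly by'' that series restates the conclusion without an argument; no mechanism is given for why the polynomial in the $\zeta(n)$ that appears is precisely the one in \cite[(2.2)]{IharaKanekoZagier2006} rather than some other combination.

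Finally, note that within the logic of this paper there is a much shorter integral-free route to \eqref{eq:regularization}: Theorem~\ref{thm:main} together with \eqref{eq:asympH} yields \eqref{eq:EDSR*}, and the paper records (citing \cite[Theorem~2]{IharaKanekoZagier2006}) that, granted the DSR, \eqref{eq:regularization}, \eqref{eq:EDSR*}, and \eqref{eq:EDSRsh} are equivalent as families of abstract relations. If your goal is an integral-free proof of the regularization theorem, that deduction is the path consistent with the paper; your sketch, if its combinatorial core were actually carried out, would amount to an independent and substantially harder direct proof.
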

By combining this theorem with the DSR, the DSR is extended as follows.
Let $\mathrm{reg}_*\colon\fH^1_*\to\fH^0_*$ be the composition of the isomorphism $\fH^1_*\xrightarrow{\simeq}\fH^0_*[T]$ defining $Z^*$ and the specializing map $\fH^0_*[T]\to\fH^0_*$; $T\mapsto 0$.
The algebra homomorphism $\mathrm{reg}_{\sh}\colon\fH^1_{\sh}\to\fH^0_{\sh}$ is defined similarly.
\begin{theorem}[Extended double shuffle relation; Ihara--Kaneko--Zagier~\cite{IharaKanekoZagier2006}]\label{thm:EDSR}
For any $w_1\in\fH^1$ and $w_0\in\fH^0$, we have
\begin{equation}\label{eq:EDSR*}
Z(\mathrm{reg}_*(w_1*w_0-w_1\sh w_0))=0
\end{equation}
and
\begin{equation}\label{eq:EDSRsh}
Z(\mathrm{reg}_{\sh}(w_1*w_0-w_1\sh w_0))=0.
\end{equation}
\end{theorem}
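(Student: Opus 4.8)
The plan is to reduce both \eqref{eq:EDSR*} and \eqref{eq:EDSRsh} to the single polynomial identity $Z^*(\xi)=0$ in $\RR[T]$, where $\xi\coloneqq w_1*w_0-w_1\sh w_0$, and to obtain that from Theorem~\ref{thm:regularization} together with the homomorphism properties of $Z^*$ and $Z^{\sh}$. First I would note that $w_1*w_0$ and $w_1\sh w_0$ both lie in $\fH^1$ (which is closed under $*$ and under $\sh$), so all the maps below apply to $\xi$. Next I would record the elementary compatibility that $Z(\mathrm{reg}_*(w))$ equals the value at $T=0$ of the polynomial $Z^*(w)\in\RR[T]$ for every $w\in\fH^1$: indeed $\mathrm{reg}_*$ and $Z^*$ are both built from the same isomorphism $\fH^1_*\xrightarrow{\simeq}\fH^0_*[T]$, post-composed respectively with the specialization $T\mapsto0$ and with the coefficientwise application of $Z$, and these two post-compositions commute. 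Likewise $Z(\mathrm{reg}_{\sh}(w))$ is the value at $T=0$ of $Z^{\sh}(w)$. Hence it suffices to prove $Z^*(\xi)=0$ and $Z^{\sh}(\xi)=0$ in $\RR[T]$.

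For the core computation, set $c\coloneqq Z(w_0)\in\RR$; since $w_0\in\fH^0$ and both $Z^*$ and $Z^{\sh}$ extend $Z$, we have $Z^*(w_0)=Z^{\sh}(w_0)=c$. Put $P\coloneqq Z^*(w_1)\in\RR[T]$. Because $Z^*$ is a homomorphism for $*$, $Z^*(w_1*w_0)=cP$. Because $Z^{\sh}$ is a homomorphism for $\sh$, $Z^{\sh}(w_1\sh w_0)=c\,Z^{\sh}(w_1)$. Now I apply Theorem~\ref{thm:regularization}, extended from the words $e_{\bk}$ to all of $\fH^1$ by $\QQ$-linearity (legitimate since $Z^*$, $Z^{\sh}$, $\rho$ are $\QQ$-linear and $Z^{\sh}(1)=1=\rho(1)$), which gives $Z^{\sh}=\rho\circ Z^*$ on $\fH^1$; combined with the $\RR$-linearity of $\rho$ this turns the last equation into $Z^{\sh}(w_1\sh w_0)=c\,\rho(P)=\rho(cP)$. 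On the other hand Theorem~\ref{thm:regularization} also gives $Z^{\sh}(w_1\sh w_0)=\rho(Z^*(w_1\sh w_0))$ and $Z^{\sh}(w_1*w_0)=\rho(Z^*(w_1*w_0))=\rho(cP)$. Comparing, $\rho(Z^*(w_1\sh w_0))=\rho(cP)=\rho(Z^*(w_1*w_0))$. Since $\rho$ is injective --- clear from its explicit description in \cite[(2.2)]{IharaKanekoZagier2006}, where $\rho(T^n)-T^n$ has degree $<n$, so $\rho$ is unitriangular with respect to the monomials $1,T,T^2,\dots$ --- I conclude $Z^*(w_1\sh w_0)=Z^*(w_1*w_0)$, that is $Z^*(\xi)=0$, and then $Z^{\sh}(\xi)=\rho(Z^*(\xi))=0$ as well. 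Specializing $T=0$ and using the compatibilities from the first paragraph yields \eqref{eq:EDSR*} and \eqref{eq:EDSRsh}.

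The single substantive ingredient is Theorem~\ref{thm:regularization}; everything else is formal. The point requiring care --- and the reason the argument genuinely needs the detour through $Z^{\sh}$ and $\rho$ --- is that $\mathrm{reg}_*$ is a homomorphism for $*$ but \emph{not} for $\sh$, and symmetrically for $\mathrm{reg}_{\sh}$, so $Z^*$ cannot be evaluated on the shuffle product $w_1\sh w_0$ by a direct multiplicativity argument. The invertibility of $\rho$ is also essential rather than cosmetic: because $\rho$ does not fix constant terms, the weaker identity $Z^{\sh}(\xi)|_{T=0}=0$ would not by itself imply $Z^*(\xi)|_{T=0}=0$, so one really must establish the polynomial identity $Z^*(\xi)=0$. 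Beyond keeping these homomorphism-versus-linearity distinctions straight, I foresee no real difficulty.
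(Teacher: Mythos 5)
Your argument is logically sound: granting Theorem~\ref{thm:regularization} (extended from the words $e_{\bk}$ to all of $\fH^1$ by linearity, with $\rho(1)=1$), the homomorphism properties of $Z^*$ and $Z^{\sh}$, the unitriangularity (hence injectivity) of $\rho$, and the compatibility $Z(\mathrm{reg}_*(w))=Z^*(w)|_{T=0}$, your chain of identities correctly yields $Z^*(w_1*w_0-w_1\sh w_0)=0$ and hence both \eqref{eq:EDSR*} and \eqref{eq:EDSRsh}. However, this is a genuinely different route from the paper's, and in a sense it is the \emph{old} route: what you have reconstructed is essentially the known implication ``\eqref{eq:regularization} $\Rightarrow$ \eqref{eq:EDSR*}, \eqref{eq:EDSRsh}'' of \cite[Theorem~2]{IharaKanekoZagier2006}, which the paper explicitly quotes as an equivalence of abstract relation families. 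The paper's own proof never touches Theorem~\ref{thm:regularization} or the polylogarithm asymptotics \eqref{eq:asympLi}; the whole point of the note is that the only available proofs of \eqref{eq:regularization} go through the integral representation of $\Li_{\bk}(z)$ and gamma-function analysis (or, in Kaneko--Yamamoto's version, through Yamamoto's integral), which is precisely what the author wants to avoid. Instead, the paper proves a finite-level ``asymptotic double shuffle relation'' $Z_N(w_1*w_0-w_1\sh w_0)=O(N^{-1}\log^{\bullet}N)$ (Theorem~\ref{thm:main}) by purely series-theoretic means --- the discretized iterated-integral identity of Theorem~\ref{thm:MSW} plus the elementary estimates of Lemma~\ref{lem:R} --- and then extracts \eqref{eq:EDSR*} by substituting the harmonic-sum asymptotics \eqref{eq:asympH} and annihilating the coefficients of $(\log N+\gamma)^i$ from the top degree downward; \eqref{eq:EDSRsh} then follows from the quoted equivalence. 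Your derivation is shorter and entirely formal, but it buys this brevity by importing the analytic regularization theorem wholesale; the paper's derivation is longer but integral-free, which is its raison d'\^etre. As a standalone verification of the statement your proof is fine; as a proof in the spirit of this paper it would be considered circular with respect to the stated goal.
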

Under the assumption of the DSR, it has been shown that \eqref{eq:regularization}, \eqref{eq:EDSR*}, and \eqref{eq:EDSRsh} are equivalent as families of abstract relations.
See \cite[Theorem~2]{IharaKanekoZagier2006}.
It is conjectured that the EDSR exhausts all algebraic relations among MZVs and thus the EDSR is considered a very important family of relations.

The proof of the EDSR by Ihara, Kaneko, and Zagier was based on analysis, but Kaneko and Yamamoto \cite{KanekoYamamoto2018} have provided an almost purely algebraic alternative proof.
However, the foundation of their proof, their ``integral-series identity,'' employs an integral (namely, Yamamoto's integral expression).

In this short note, based on a recent result (= Theorem~\ref{thm:MSW}), we provide a new proof of the EDSR employing only elementary manipulations of finite sums and observing simple behaviors of the discrete limit as $N\to\infty$. Specifically, we utilize \eqref{eq:asympH} but not \eqref{eq:asympLi}, and directly prove \eqref{eq:EDSR*}.
\subsection*{Acknowledgements}
The author would like to thank Professor Masataka Ono and Professor Shuji Yamamoto for their valuable discussions.
The author would also like to thank the anonymous referee for his/her valuable suggestions that helped improve the readability of the manuscript.
\section{Nuts and bolts}
Let $N$ be a positive integer.
For a positive integer $n$, we use the notation $[n]\coloneqq\{1,2,\dots,n\}$.
\subsection{$\zeta_{<N}^{\flat}$-value}
For an index $\bk=(k_1,\dots,k_r)$ with $\wt(\bk)=k$, we define $\zeta_{<N}^{\flat}(\bk)$ as
\[
\zeta_{<N}^{\flat}(\bk)\coloneqq\sum_{(n_1,\dots,n_k)\in S_N(\bk)}\prod_{i=1}^k\hat{\omega}_{u_i}^{(N)}(n_i),
\]
where
\[
S_N(\bk)\coloneqq\left\{(n_1,\dots,n_k)\in\ZZ^k \ \middle| \ \begin{array}{cc} n_{i-1}<n_i  & \text{ if } i\in J(\bk)\cup\{k+1\}, \\ n_{i-1}\leq n_i & \text{ if } i\in [k]\setminus J(\bk), \\ \text{where }n_0=0, & n_{k+1}=N \end{array}\right\},
\]
$\hat{\omega}^{(N)}_0(n)\coloneqq\frac{1}{n}$, $\hat{\omega}^{(N)}_1(n)\coloneqq\frac{1}{N-n}$, and if $i\in J(\bk)$, then $u_i=1$; otherwise, $u_i=0$.
($(u_i)_{i\in[k]}$ depends on $\bk$.)
The following ``finite sum = finite sum'' type identity is the key tool in this note.
\begin{theorem}[{Maesaka--Seki--Watanabe~\cite[Theorem~1.3]{MaesakaSekiWatanabe2024}}]\label{thm:MSW}
For any index $\bk$, we have
\[
\zeta_{<N}^{}(\bk)=\zeta_{<N}^{\flat}(\bk).
\]
\end{theorem}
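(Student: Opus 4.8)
The plan is to prove the identity $\zeta_{<N}^{}(\bk)=\zeta_{<N}^{\flat}(\bk)$ by a \emph{connected sum} (transport) argument, in the style used by Seki and Yamamoto for the duality of MZVs. The first step is to expand both sides as iterated finite sums attached to the word $e_{\bk}=e_{u_1}\cdots e_{u_k}$: on the left, one summation variable per block $e_{k_j}$ of $\bk$, carrying the weight $n^{-k_j}$, with a strict increase between consecutive blocks and the top index bounded by $<N$; on the right, one summation variable $n_i$ per \emph{letter}, obeying the weak/strict pattern recorded in $S_N(\bk)$ and carrying the weight $\hat{\omega}^{(N)}_0(n_i)=1/n_i$ at each $e_0$ and $\hat{\omega}^{(N)}_1(n_i)=1/(N-n_i)$ at each $e_1$. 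These expansions do not agree term by term --- indeed a single block cannot be transported in isolation once a nontrivial lower bound on its variables is imposed --- so some global device is needed.

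Accordingly, I would introduce, for $0\le j\le r$, a hybrid finite multiple sum $\mathcal{Z}_N^{(j)}$ in which the first $j$ blocks of $\bk$ appear in the $\flat$ form above while the remaining $r-j$ blocks appear in the series form, the two parts being glued by an explicit weight $C_N(a,b)$ --- a \emph{connector} --- depending on $N$ and on the two gluing variables (the largest variable $a$ of the $\flat$-part and the smallest index $b$ of the series-part). The connector is to be chosen so that it degenerates to $1$ at the two extremes, which then forces $\mathcal{Z}_N^{(0)}=\zeta_{<N}^{}(\bk)$ and $\mathcal{Z}_N^{(r)}=\zeta_{<N}^{\flat}(\bk)$.

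The core of the argument is the transport relation $\mathcal{Z}_N^{(j)}=\mathcal{Z}_N^{(j+1)}$, which converts one further block from the series side into the $\flat$ side. Summing out the freshly converted block, this relation reduces --- by a sub-induction on the block length $k_{j+1}$, using only telescoping of finite harmonic sums and elementary rational-function identities such as $\frac{1}{n(N-n)}=\frac1N\bigl(\frac1n+\frac1{N-n}\bigr)$ --- to a single-variable identity for the connector at adjacent arguments: schematically, $C_N(a,b)/b$ is expressed as a weighted finite sum of the values $C_N(p,b)$ with $a<p<b$, plus a boundary term that carries the genuine discrepancy between the two forms. Chaining $\mathcal{Z}_N^{(0)}=\mathcal{Z}_N^{(1)}=\cdots=\mathcal{Z}_N^{(r)}$ then yields the theorem.

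The step I expect to be the real obstacle is the explicit determination of the connector $C_N$: it must be simultaneously trivial at both ends and compatible with the transport relation, and getting the strict/weak bookkeeping of $S_N(\bk)$ to mesh with the block decomposition of $\bk$ is the delicate point. Once $C_N$ has been identified correctly, everything else is routine manipulation of finite sums; in particular no integral and no passage to a limit is used anywhere --- the limit $N\to\infty$, under which both sides tend to $\zeta(\bk)$, serves only as a consistency check.
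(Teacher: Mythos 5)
First, note that the paper does not prove this statement at all: Theorem~\ref{thm:MSW} is imported verbatim from the reference [MSW] (arXiv:2402.05730), and the only comment made about its proof is that it ``was proved in [MSW] using only series manipulation.'' So there is no in-paper argument to compare yours against; what follows assesses your proposal on its own terms.

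Your proposal is a strategy, not a proof, and the gap is exactly where you yourself locate it: the connector $C_N(a,b)$ is never exhibited, and the transport relation $\mathcal{Z}_N^{(j)}=\mathcal{Z}_N^{(j+1)}$ is never verified --- you only assert that it ``reduces, schematically'' to an identity for $C_N$ that you do not state. Since, as you correctly observe, a single block cannot be transported in isolation once its variables are bounded below by a positive integer, the entire content of the theorem is concentrated in the choice of $C_N$ and in the proof of the transport relation; deferring both leaves nothing established. A complete write-up would need (i) an explicit $C_N$ with the boundary normalization $C_N(0,\cdot)=C_N(\cdot,N)=1$ (or whatever is correct for your gluing convention); (ii) a precise definition of the hybrid sum $\mathcal{Z}_N^{(j)}$, in particular how the strict/weak inequalities encoded in $S_N(\bk)$ and the outer constraint $n_k<N$ are split between the $\flat$-half and the series-half; and (iii) a full verification of the one-block transport, carrying out the sub-induction on $k_{j+1}$ and showing that the ``boundary term'' you allude to actually vanishes or cancels. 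None of these steps is routine, and the whole difficulty of the theorem lives there. For what it is worth, the cited proof in [MSW] does proceed by purely finite series manipulation (an induction that converts the index piece by piece), so your general instinct --- no integrals, no limits, only telescoping of finite sums --- is sound; but as submitted the argument has a hole at precisely its load-bearing point.
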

This theorem was proved in \cite{MaesakaSekiWatanabe2024} using only series manipulation.
As explained in \cite{MaesakaSekiWatanabe2024}, this theorem can be considered as the discretization of the iterated integral expression of the MZV.
In their paper, this was applied to duality relations, but here it is applied to the EDSR.
\subsection{$R_{<N}$-value and $\zeta_{<N}^{\natural}$-value}
Let $k$ be a positive integer and $a_1, \dots, a_k$, $b_1,\dots, b_k$ non-negative integers satisfying $a_1\geq 1$ and $a_i+b_i\geq 1$ for all $i\in[k]$.
We set
\[
R_{<N}^{}(a_1,\dots, a_k; b_1,\dots, b_k)\coloneqq\sum_{0<n_1<n_2<\cdots<n_k<N}\prod_{i=1}^k\frac{1}{(N-n_i)^{a_i}n_i^{b_i}}.
\]
\begin{lemma}\label{lem:R}
\begin{enumerate}[leftmargin=*, label=(\roman*)]
\item\label{it:log}
In the general setting described above, we have
\[
R_{<N}^{}(a_1,\dots, a_k; b_1,\dots, b_k)=O(\log^kN),\quad \text{as } N\to\infty.
\]
\item\label{it:limit1}
If there exists at least one $i\in[k]$ satisfying $b_i\geq 1$ and $a_i+b_i\geq 2$, then
\[
R_{<N}^{}(a_1,\dots, a_k; b_1,\dots, b_k)=O(N^{-1}\log^kN),\quad \text{as } N\to\infty.
\]
\item\label{it:limit2}
If there exist $i$, $j\in[k]$ satisfying $i<j$, $a_i\geq 2$, and $b_j\geq 1$, then
\[
R_{<N}^{}(a_1,\dots, a_k; b_1,\dots, b_k)=O(N^{-1}\log^kN),\quad \text{as } N\to\infty.
\]
\end{enumerate}
\end{lemma}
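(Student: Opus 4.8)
The plan is to reduce all three parts to one‑variable estimates. I would first record two bounds, uniform in the exponents and valid as $N\to\infty$ with absolute implied constants: for non‑negative integers $a,b$ with $a+b\ge1$,
\[
\sum_{n=1}^{N-1}\frac{1}{(N-n)^an^b}=O(\log N),
\]
and if moreover $a\ge1$ and $b\ge1$, the same sum is $O(N^{-1}\log N)$. Both follow at once by distinguishing whether each relevant exponent is $0$, $1$, or $\ge2$, using $\sum_{n=1}^{N-1}\frac1n=O(\log N)$ and $\sum_{n\ge1}\frac1{n^2}=O(1)$, and in the mixed case the identity $\frac{1}{(N-n)n}=\frac1N\bigl(\frac1n+\frac1{N-n}\bigr)$. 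Part \ref{it:log} is then immediate: enlarging the summation set from the chain $n_1<\cdots<n_k$ to the independent ranges $1\le n_l\le N-1$ only increases the sum, and the summand factors, so $R_{<N}\le\prod_{l=1}^{k}\sum_{n=1}^{N-1}(N-n)^{-a_l}n^{-b_l}=O(\log^kN)$ with a constant depending only on $k$.

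For part \ref{it:limit2}, the point is that on the summation domain $i<j$ forces $0<N-n_j\le N-n_i$, whence, using $a_i\ge2$ and $n_i^{b_i}\ge1$,
\[
\frac{1}{(N-n_i)^{a_i}n_i^{b_i}}\cdot\frac{1}{(N-n_j)^{a_j}n_j^{b_j}}\le\frac{1}{(N-n_i)(N-n_j)}\cdot\frac{1}{(N-n_j)^{a_j}n_j^{b_j}}=\frac{1}{N-n_i}\cdot\frac{1}{(N-n_j)^{a_j+1}n_j^{b_j}}.
\]
Substituting this, again enlarging the summation set and factoring, and applying the first estimate to the index $i$ (now of type $(1,0)$) and to every $l\ne i,j$, and the second estimate to the index $j$ (now of type $(a_j+1,b_j)$ with both exponents $\ge1$), gives $R_{<N}\le O(\log N)\cdot O(N^{-1}\log N)\cdot O(\log^{k-2}N)=O(N^{-1}\log^kN)$, the constant again depending only on $k$.

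Part \ref{it:limit1} I would prove by induction on $k$. Let $i_0$ be a distinguished index. If $a_{i_0}\ge1$, then $a_{i_0},b_{i_0}\ge1$ and the final step of the argument for \ref{it:limit2} applies directly. If $a_{i_0}=0$, then $b_{i_0}\ge2$ and, since $a_1\ge1$, also $i_0\ge2$; on the summation domain $n_{i_0-1}\le n_{i_0}$, so $\frac{1}{n_{i_0}^{b_{i_0}}}\le\frac{1}{n_{i_0}^2}\le\frac{1}{n_{i_0-1}n_{i_0}}$. Carrying out the sum over $n_{i_0}$ first costs at most $\sum_{n=1}^{N-1}\frac1n=O(\log N)$, and the surviving factor $1/n_{i_0-1}$ merges into the index $i_0-1$, raising its second exponent by $1$; what is left is an $R_{<N}$‑value in $k-1$ variables, still admissible and still satisfying the hypothesis of \ref{it:limit1} at the position $i_0-1$ (its second exponent is now $\ge1$ and the sum of its two exponents is $\ge2$, since $a_{i_0-1}+b_{i_0-1}\ge1$). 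The inductive hypothesis bounds it by $O(N^{-1}\log^{k-1}N)$, so $R_{<N}=O(N^{-1}\log^kN)$; the base case $k=1$ falls under the first alternative because $a_1\ge1$. The constants accumulate multiplicatively and remain bounded in terms of $k$ alone.

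The step I expect to be the main obstacle is the subcase $a_{i_0}=0$ of part \ref{it:limit1}: there the crude factorised bound wastes one power of $\log N$, so one has to genuinely transfer the surplus $n_{i_0}^{-(b_{i_0}-1)}$ down the chain and iterate, all the while checking that the reduced tuple stays admissible and that the hypothesis of \ref{it:limit1} is preserved. Everything else reduces to the two one‑variable estimates above.
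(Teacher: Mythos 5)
Your proof is correct, and for part \ref{it:limit2} it is essentially the paper's argument in different clothing: where the paper substitutes $m_h=N-n_h$ and uses $1/m_i<1/m_j$ to create the mixed factor $\frac{1}{(N-m_j)m_j}$ before applying the partial fraction $\frac{1}{(N-m_j)m_j}=\frac1N(\frac{1}{m_j}+\frac{1}{N-m_j})$, you skip the change of variables and use $N-n_j\le N-n_i$ to push a factor $1/(N-n_j)$ onto position $j$, after which the same partial fraction is hidden inside your one-variable estimate. The genuine difference is that the paper proves \ref{it:log} and \ref{it:limit1} only by citation to \cite{MaesakaSekiWatanabe2024}, whereas you supply a complete self-contained argument; the one nontrivial extra ingredient you need is the induction handling the subcase $a_{i_0}=0$, $b_{i_0}\ge2$ of \ref{it:limit1}, where you transfer a factor $1/n_{i_0}$ down the chain to position $i_0-1$ and check that admissibility ($a_1\ge1$, which forces $i_0\ge2$) and the hypothesis of \ref{it:limit1} are preserved. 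That reduction is sound, the base case is covered because $a_1\ge1$ forces the first alternative when $k=1$, and since every step is a product of at most $k$ one-variable bounds with absolute constants, the implied constants indeed depend only on $k$, as the lemma requires. Your version buys independence from the external reference at the cost of a slightly longer case analysis; the paper buys brevity by outsourcing the two easier parts.
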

\begin{proof}
Although there is some overlap with \cite[Lemma~2.1]{MaesakaSekiWatanabe2024} in both the statement and the proof, the proof is provided here for the convenience of the reader.
Let $c_1, d_1, \dots, c_k, d_k$ be $0$ or $1$, where for each $h\in[k]$, we can conveniently choose $c_h$ and $d_h$ such that $c_h+d_h=1$, making it possible to adapt to each of the subsequent arguments.

The estimate \ref{it:log} follows from
\begin{align*}
R_{<N}^{}(a_1,\dots, a_k; b_1,\dots, b_k)&\leq\sum_{0<n_1<\cdots<n_k<N}\prod_{i=1}^k\frac{1}{(N-n_i)^{c_i}n_i^{d_i}}\\
&\leq \sum_{0<n_1,\dots,n_k<N}\frac{1}{n_1\cdots n_k}=O(\log^kN).
\end{align*}
For \ref{it:limit1}, we assume that $b_i\geq 1$ and $a_i+b_i\geq 2$ for some $i$.
Then by an estimate
\begin{align*}
\frac{1}{(N-n_1)^{a_1}n_1^{b_1}}\cdot\frac{1}{(N-n_i)^{a_i}n_i^{b_i}}&\leq\frac{1}{N-n_1}\cdot\frac{1}{n_i}\cdot\frac{1}{(N-n_i)^{a_i}n_i^{b_i-1}}\\
&\leq\frac{1}{N-n_1}\cdot\frac{1}{n_i}\cdot\frac{1}{(N-n_i)^{c_i}n_i^{d_i}}\\
&\leq\frac{1}{(N-n_1)n_1}\cdot\frac{1}{(N-n_i)^{c_i}n_i^{d_i}}
\end{align*}
(for the case $i=1$, read as $\frac{1}{(N-n_1)^{a_1}n_1^{b_1}}\leq\frac{1}{(N-n_1)n_1}$) and the partial fraction decomposition
\[
\frac{1}{(N-n_1)n_1}=\frac{1}{N}\left(\frac{1}{N-n_1}+\frac{1}{n_1}\right),
\]
we have
\begin{align*}
&R_{<N}^{}(a_1,\dots, a_k; b_1,\dots, b_k)\\
&\leq\frac{1}{N}\left(\sum_{0<n_1<\cdots<n_k<N}\frac{1}{N-n_1}\prod_{h=2}^{k}\frac{1}{(N-n_h)^{c_h}n_h^{d_h}}+\sum_{0<n_1<\cdots<n_k<N}\frac{1}{n_1}\prod_{h=2}^{k}\frac{1}{(N-n_h)^{c_h}n_h^{d_h}}\right)\\
&=O(N^{-1}\log^kN).
\end{align*}
For \ref{it:limit2}, we assume that $a_i\geq 2$ and $b_j\geq 1$ for some $i<j$.
Then by applying the transformation $N-n_h\mapsto m_h$ and an estimate
\begin{align*}
\frac{1}{(N-m_j)^{b_j}m_j^{a_j}}\cdot\frac{1}{(N-m_i)^{b_i}m_i^{a_i}}&\leq\frac{1}{N-m_j}\cdot\frac{1}{m_i}\cdot\frac{1}{(N-m_i)^{c_i}m_i^{d_i}}\\
&\leq\frac{1}{(N-m_j)m_j}\cdot\frac{1}{(N-m_i)^{c_i}m_i^{d_i}},
\end{align*}
we have
\begin{align*}
&R_{<N}^{}(a_1,\dots, a_k; b_1,\dots, b_k)\\
&\leq\frac{1}{N}\left(\sum_{0<m_k<m_{k-1}<\cdots<m_1<N}\frac{1}{N-m_j}\prod_{h\in[k]\setminus\{j\}}\frac{1}{(N-m_h)^{c_h}m_h^{d_h}}\right.\\
&\qquad\qquad\left.+\sum_{0<m_k<m_{k-1}<\cdots<m_1<N}\frac{1}{m_j}\prod_{h\in[k]\setminus\{j\}}\frac{1}{(N-m_h)^{c_h}m_h^{d_h}}\right)\\
&=O(N^{-1}\log^kN).
\end{align*}
This concludes the proof.
\end{proof}
As examples of $R_{<N}^{}$-values not satisfying conditions \ref{it:limit1} nor \ref{it:limit2}, take $R_{<N}^{}(2,1;0,0)$ and $R_{<N}^{}(1,2;0,0)$.
The former has a finite non-zero limit $\lim_{N\to\infty}R_{<N}^{}(2,1;0,0)=\zeta(1,2)>0$ and the latter diverges $\lim_{N\to\infty}R_{<N}^{}(1,2;0,0)=\infty$.

For an index $\bk=(k_1,\dots,k_r)$ with $\wt(\bk)=k$, we define $\zeta_{<N}^{\natural}(\bk)$ as
\[
\zeta_{<N}^{\natural}(\bk)\coloneqq\sum_{(n_1,\dots,n_k)\in T_N(k)}\prod_{i=1}^k\hat{\omega}_{u_i}^{(N)}(n_i),
\]
where $T_N(k)\coloneqq\{(n_1,\dots,n_k)\in\ZZ^k \mid 0<n_1<\cdots<n_k<N\}$ and $(u_i)_{i\in[k]}$ is determined by $\bk$ in the same way as $\zeta_{<N}^{\flat}(\bk)$ was defined, i.e., if $i\in J(\bk)$, then $u_i=1$; otherwise, $u_i=0$.
\begin{proposition}\label{prop:natural}
For any index $\bk$, we have
\[
\zeta_{<N}^{\flat}(\bk)=\zeta_{<N}^{\natural}(\bk)+O(N^{-1}\log^{\bullet}N),\quad \text{as } N\to\infty.
\]
\end{proposition}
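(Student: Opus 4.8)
The plan is to write both $\zeta_{<N}^{\flat}(\bk)$ and $\zeta_{<N}^{\natural}(\bk)$ as finite sums of $R_{<N}$-values indexed by the same combinatorial data, so that the difference $\zeta_{<N}^{\flat}(\bk)-\zeta_{<N}^{\natural}(\bk)$ collapses to exactly the ``degenerate'' terms, and then to dispose of each of those terms by Lemma~\ref{lem:R}\,\ref{it:limit1}.

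First I would note that every $(n_1,\dots,n_k)\in S_N(\bk)$ is weakly increasing, hence is recorded faithfully by its set of distinct values $0<m_1<\dots<m_s<N$ together with the maximal constant runs $P_t=\{i\in[k] : n_i=m_t\}$; each $P_t$ is an interval $\{a_t,a_t+1,\dots,b_t\}$, with $a_1=1$, $b_s=k$, and $a_{t+1}=b_t+1$. Since an equality $n_{i-1}=n_i$ is permitted in $S_N(\bk)$ only for $i\notin J(\bk)$, the admissible run patterns are precisely those whose set of run-starts $A\coloneqq\{a_1,\dots,a_s\}$ satisfies $J(\bk)\subseteq A\subseteq[k]$ (the containment $1\in A$ being automatic, as $1\in J(\bk)$). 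Summing the weight $\prod_{i\in[k]}\hat{\omega}_{u_i}^{(N)}(n_i)$ over the tuples with a fixed such $A$ then yields exactly $R_{<N}^{}(\alpha_1,\dots,\alpha_s;\beta_1,\dots,\beta_s)$, where $\alpha_t$ and $\beta_t$ are the numbers of $i\in P_t$ with $u_i=1$ and with $u_i=0$, respectively. Taking $A=[k]$ reproduces $\zeta_{<N}^{\natural}(\bk)$, while letting $A$ range over all admissible sets reproduces $\zeta_{<N}^{\flat}(\bk)$, so that
\[
\zeta_{<N}^{\flat}(\bk)-\zeta_{<N}^{\natural}(\bk)=\sum_{J(\bk)\subseteq A\subsetneq[k]}R_{<N}^{}(\alpha_1,\dots,\alpha_s;\beta_1,\dots,\beta_s).
\]

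Next I would check that each summand on the right-hand side is covered by Lemma~\ref{lem:R}\,\ref{it:limit1}. Because $A\subsetneq[k]$, some run $P_t$ has $|P_t|\geq 2$; within such a run only the first index $a_t$ can belong to $J(\bk)$, since $n_{i-1}=n_i$ for $i=a_t+1,\dots,b_t$ forces $i\notin J(\bk)$. Hence $\alpha_t\in\{0,1\}$, so $\beta_t=|P_t|-\alpha_t\geq 1$ and $\alpha_t+\beta_t=|P_t|\geq 2$, which is exactly the hypothesis of that part of the lemma (the admissibility requirement $\alpha_1\geq 1$ is met because $1\in J(\bk)$ gives $u_1=1$). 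Each such $R_{<N}$-value is therefore $O(N^{-1}\log^kN)$ with implied constant depending only on $k=\wt(\bk)$, and since there are only finitely many admissible $A$, the displayed difference is $O(N^{-1}\log^{\bullet}N)$, as claimed. Incidentally, parts \ref{it:log} and \ref{it:limit2} of Lemma~\ref{lem:R} are not needed for this argument.

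The step I expect to require the most care is the combinatorial bookkeeping in the second paragraph: verifying that collapsing equal consecutive coordinates gives a genuine bijection between $S_N(\bk)$ and the pairs consisting of an admissible run-start set $A\supseteq J(\bk)$ and a strictly increasing tuple of values, and correctly reading off the exponents $\alpha_t,\beta_t$ (in particular the crucial bound $\alpha_t\leq 1$ inside every run of length at least two). Once this is in place, the rest is a direct application of Lemma~\ref{lem:R}.
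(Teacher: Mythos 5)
Your proof is correct and follows essentially the same route as the paper: the difference $\zeta_{<N}^{\flat}(\bk)-\zeta_{<N}^{\natural}(\bk)$ is the sum over $S_N(\bk)\setminus T_N(k)$, which decomposes into finitely many $R_{<N}$-values each satisfying the hypothesis of Lemma~\ref{lem:R}\,\ref{it:limit1}. The paper states this tersely (its parenthetical remark that no factor $(N-n_i)^2$ appears is exactly your observation that $\alpha_t\leq 1$ in every run), whereas you have written out the run-pattern bookkeeping explicitly; the substance is identical.
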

\begin{proof}
Since the right-hand side of
\[
\zeta_{<N}^{\flat}(\bk)-\zeta_{<N}^{\natural}(\bk)=\sum_{(n_1,\dots,n_k)\in S_N(\bk)\setminus T_N(k)}\prod_{i=1}^k\hat{\omega}_{u_i}^{(N)}(n_i)
\]
can be decomposed as a finite sum of $R_{<N}^{}$-values satisfying the condition \ref{it:limit1} in Lemma~\ref{lem:R}, we have the desired conclusion.
\end{proof}
Here, we define three $\QQ$-linear maps $Z_N\colon\fH^1\to\QQ$, $Z_N^{\flat}\colon\fH^1\to\QQ$, and $Z_N^{\natural}\colon\fH^1\to\QQ$ by $Z_N(1)=Z_N^{\flat}(1)=Z_N^{\natural}(1)=1$ and
\[
Z_N(e_{\bk})=\zeta_{<N}^{}(\bk),\quad Z_N^{\flat}(e_{\bk})=\zeta_{<N}^{\flat}(\bk),\quad \text{and} \quad Z_N^{\natural}(e_{\bk})=\zeta_{<N}^{\natural}(\bk).
\]
As is well known, $Z_N$ satisfies the harmonic product formula, that is, $Z_N\colon\fH^1_*\to\QQ$ is an algebra homomorphism.
On the other hand, $Z_N^{\natural}$ satisfies the shuffle product formula approximately.
\begin{proposition}[Asymptotic shuffle product formula]\label{prop:sh}
For any $w_1\in\fH^1$ and $w_0\in\fH^0$, we have
\[
Z_N^{\natural}(w_1)Z_N^{\natural}(w_0)=Z_N^{\natural}(w_1\sh w_0)+O(N^{-1}\log^{\bullet}N),\quad \text{as } N\to\infty.
\]
\end{proposition}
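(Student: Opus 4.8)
The plan is to expand $Z_N^{\natural}(w_1)Z_N^{\natural}(w_0)$ straight from the definition and to separate off the ``diagonal'' contributions. By $\QQ$-bilinearity of $Z_N^{\natural}$ and of $\sh$, it suffices to treat $w_1=e_{\bk}$ and $w_0=e_{\bl}$ with $\bk$ a nonempty index and $\bl$ a nonempty admissible index (the cases $w_1=1$ or $w_0=1$ give an exact identity, since $Z_N^{\natural}(1)=1$ and $1\sh w=w$). Write $k=\wt(\bk)$, $l=\wt(\bl)$, and let $(u_i)_{i\in[k]}$ and $(v_j)_{j\in[l]}$ be the associated type sequences, so that by definition
\[
Z_N^{\natural}(e_{\bk})\,Z_N^{\natural}(e_{\bl})=\sum_{\substack{0<n_1<\cdots<n_k<N\\0<m_1<\cdots<m_l<N}}\ \prod_{i=1}^{k}\hat{\omega}_{u_i}^{(N)}(n_i)\prod_{j=1}^{l}\hat{\omega}_{v_j}^{(N)}(m_j).
\]
I would split this sum into the part where $n_1,\dots,n_k,m_1,\dots,m_l$ are pairwise distinct and the diagonal part, where $n_i=m_j$ for at least one pair $(i,j)$.

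For the distinct part, the familiar ``integral shuffle'' bookkeeping applies: giving $k+l$ pairwise distinct points $0<p_1<\cdots<p_{k+l}<N$ together with the information of which of them are the $n_i$ and which the $m_j$ (their internal orders being then forced) is the same as choosing an interleaving of the words $e_{\bk}$ and $e_{\bl}$; since $\hat{\omega}_u^{(N)}$ depends only on the letter $u$, not on its position, summing over all such data yields $\sum_{w\in e_{\bk}\sh e_{\bl}}Z_N^{\natural}(w)=Z_N^{\natural}(e_{\bk}\sh e_{\bl})$. (Every interleaving of two words beginning with $e_1$ again lies in $\fH^1$.)

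It remains to show that the diagonal part is $O(N^{-1}\log^{\bullet}N)$. Grouping its terms first by the coincidence pattern $\{(i,j)\mid n_i=m_j\}$ and then by the order of the $s$ surviving points, the diagonal part becomes a finite sum of expressions $R_{<N}^{}(\alpha_1,\dots,\alpha_s;\beta_1,\dots,\beta_s)$ with $s<k+l$ and each $\alpha_t+\beta_t\in\{1,2\}$; these are genuine $R_{<N}^{}$-values, since the smallest point, covered by $n_1$ and/or $m_1$ of type $u_1=v_1=1$, has $\alpha_1\geq 1$. Now fix one coinciding pair $(i,j)$ and let $t_0$ be the position among $p_1<\cdots<p_s$ of the point $n_i=m_j$; its two letters contribute $\alpha_{t_0}+\beta_{t_0}=2$. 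If $u_i=0$ or $v_j=0$, then also $\beta_{t_0}\geq 1$, so Lemma~\ref{lem:R}\ref{it:limit1} applies. If $u_i=v_j=1$, then $\alpha_{t_0}=2$; since $\bl$ is admissible we have $v_l=0$, hence $j<l$ and $m_l>m_j$, so $m_l$ occupies a position $t_1>t_0$ with $\beta_{t_1}\geq 1$, and Lemma~\ref{lem:R}\ref{it:limit2} applies. In either case each of these finitely many $R_{<N}^{}$-values is $O(N^{-1}\log^{\bullet}N)$, hence so is the diagonal part, which proves the proposition. The one real difficulty is this last subcase: a collision of two $e_1$'s produces a square factor $(N-p_{t_0})^{-2}$ that is tamed only because admissibility of $w_0$ forces a later $e_0$-type variable, which is exactly the hypothesis of Lemma~\ref{lem:R}\ref{it:limit2}.
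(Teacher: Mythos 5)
Your proof is correct and follows essentially the same route as the paper: expand the product of the two $\zeta_{<N}^{\natural}$-sums, identify the pairwise-distinct part with $Z_N^{\natural}(e_{\bk}\sh e_{\bl})$ via interleavings, and bound the diagonal part by $R_{<N}$-values using Lemma~\ref{lem:R}\ref{it:limit1} or \ref{it:limit2}, with admissibility of $w_0$ supplying the later $e_0$-type variable needed in the $u_i=v_j=1$ subcase. Your case analysis of the diagonal terms is in fact somewhat more explicit than the paper's parenthetical remark, but the underlying argument is identical.
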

\begin{proof}
Let $\bk$ be an index with $\wt(\bk)=k$, not necessarily admissible, and $\bl$ an admissible index with $\wt(\bl)=l$.
The sequence $(u_i)_{i\in[k]}$ is determined as before by $\bk$, and $u_{k+j}$ is set to $1$ if $j\in J(\bl)$, otherwise $u_{k+j}$ is $0$.
Then
\begin{align*}
\zeta_{<N}^{\natural}(\bk)\zeta_{<N}^{\natural}(\bl)&=\left(\sum_{(n_1,\dots,n_k)\in T_N(k)}\prod_{i=1}^k\hat{\omega}_{u_i}^{(N)}(n_i)\right)\left(\sum_{(m_1,\dots,m_l)\in T_N(l)}\prod_{j=1}^l\hat{\omega}_{u_{k+j}}^{(N)}(m_j)\right)\\
&=\sum_{\sigma\in\Sigma_{k,l}}\sum_{(r_1,\dots,r_{k+l})\in T_N(k+l)}\prod_{h=1}^{k+l}\hat{\omega}^{(N)}_{u_h}(r_{\sigma(h)})\\
&\qquad +\sum_{\substack{(n_1,\dots,n_k)\in T_N(k), \\ (m_1,\dots, m_l)\in T_N(l), \\ n_i=m_j \text{ for some pairs } (i,j)}}\prod_{i=1}^k\hat{\omega}_{u_i}^{(N)}(n_i)\prod_{j=1}^l\hat{\omega}_{u_{k+j}}^{(N)}(m_j),
\end{align*}
where $\Sigma_{k,l}\coloneqq\{\sigma \in \mathfrak{S}_{k+l} \mid \sigma(1)<\cdots<\sigma(k), \sigma(k+1)<\cdots<\sigma(k+l)\}$ and $\mathfrak{S}_{k+l}$ denotes the group of permutations on the set $[k+l]$.
Since
\[
\sum_{\sigma\in\Sigma_{k,l}}\sum_{(r_1,\dots,r_{k+l})\in T_N(k+l)}\prod_{h=1}^{k+l}\hat{\omega}^{(N)}_{u_h}(r_{\sigma(h)}) = Z_N^{\natural}(e_{\bk}\sh e_{\bl})
\]
and
\[
\sum_{\substack{(n_1,\dots,n_k)\in T_N(k), \\ (m_1,\dots, m_l)\in T_N(l), \\ n_i=m_j \text{ for some pairs } (i,j)}}\prod_{i=1}^k\hat{\omega}_{u_i}^{(N)}(n_i)\prod_{j=1}^l\hat{\omega}_{u_{k+j}}^{(N)}(m_j)
\]
can be decomposed as a finite sum of $R_{<N}^{}$-values satisfying conditions \ref{it:limit1} or \ref{it:limit2} in Lemma~\ref{lem:R}, we have the desired conclusion.
(Note that if $n_i=m_j$ holds for some $(i,j)$, then necessarily $n_i\leq m_l$, and since $\bl$ is admissible, it follows that $u_{k+l}=0$.)
\end{proof}
\section{The proof}
\begin{theorem}[Asymptotic double shuffle relation]\label{thm:main}
For any $w_1\in\fH^1$ and $w_0\in\fH^0$, we have
\[
Z_N(w_1*w_0-w_1\sh w_0)=O(N^{-1}\log^{\bullet}N), \quad \text{as } N\to\infty.
\]
\end{theorem}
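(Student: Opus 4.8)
The plan is to concatenate the material of Section~2 so that the harmonic product homomorphism property of $Z_N$ is transported, up to an $O(N^{-1}\log^{\bullet}N)$ error, onto the shuffle side, where Proposition~\ref{prop:sh} is available. First I would combine Theorem~\ref{thm:MSW} (which says $Z_N=Z_N^{\flat}$ on $\fH^1$) with Proposition~\ref{prop:natural} to record the composite estimate
\[
Z_N(w)=Z_N^{\natural}(w)+O(N^{-1}\log^{\bullet}N)\qquad (w\in\fH^1),
\]
valid for every element of $\fH^1$. Since $\zeta_{<N}^{\natural}(\bk)$ is an $R_{<N}^{}$-value (with each $a_i+b_i=1$ and $a_1=1$), part~\ref{it:log} of Lemma~\ref{lem:R} gives $Z_N^{\natural}(w)=O(\log^{\bullet}N)$, and hence also $Z_N(w)=O(\log^{\bullet}N)$, for every $w\in\fH^1$; I will need these a priori growth bounds below. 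By $\QQ$-bilinearity of $*$ and $\sh$ and $\QQ$-linearity of $Z_N$ it is enough to treat $w_1=e_{\bk}$ with $\bk$ an index and $w_0=e_{\bl}$ with $\bl$ admissible, which also makes the uniformity of the implied constants (guaranteed by Lemma~\ref{lem:R}) transparent.

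Next, using that $Z_N\colon\fH^1_*\to\QQ$ is an algebra homomorphism, I would write $Z_N(w_1*w_0)=Z_N(w_1)\,Z_N(w_0)$ and expand each factor by the composite estimate:
\begin{align*}
Z_N(w_1)\,Z_N(w_0)&=\bigl(Z_N^{\natural}(w_1)+O(N^{-1}\log^{\bullet}N)\bigr)\bigl(Z_N^{\natural}(w_0)+O(N^{-1}\log^{\bullet}N)\bigr)\\
&=Z_N^{\natural}(w_1)\,Z_N^{\natural}(w_0)+O(N^{-1}\log^{\bullet}N),
\end{align*}
where the two cross terms are absorbed using $Z_N^{\natural}(w_1),Z_N^{\natural}(w_0)=O(\log^{\bullet}N)$. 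Now apply the asymptotic shuffle product formula (Proposition~\ref{prop:sh}), which is available exactly because $w_0\in\fH^0$, to obtain $Z_N^{\natural}(w_1)\,Z_N^{\natural}(w_0)=Z_N^{\natural}(w_1\sh w_0)+O(N^{-1}\log^{\bullet}N)$. Finally, since $\fH^1$ is closed under $\sh$, the element $w_1\sh w_0$ again lies in $\fH^1$, so the composite estimate applies once more to give $Z_N^{\natural}(w_1\sh w_0)=Z_N(w_1\sh w_0)+O(N^{-1}\log^{\bullet}N)$. Chaining these equalities yields $Z_N(w_1*w_0)=Z_N(w_1\sh w_0)+O(N^{-1}\log^{\bullet}N)$, which is the assertion.

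I do not expect a genuine obstacle: the substantive content is already carried by Theorem~\ref{thm:MSW}, Lemma~\ref{lem:R}, Proposition~\ref{prop:natural}, and Proposition~\ref{prop:sh}, and what remains is a formal concatenation of these estimates. The place that needs a moment's care is the product expansion in the second paragraph, where one must invoke the a priori bound $Z_N^{\natural}(w)=O(\log^{\bullet}N)$ in order to see that the product of such a quantity with an $O(N^{-1}\log^{\bullet}N)$ error is still $O(N^{-1}\log^{\bullet}N)$; one should also note the bookkeeping point that $w_1\sh w_0$ stays inside $\fH^1$, so that the maps $Z_N$, $Z_N^{\flat}$, and $Z_N^{\natural}$ can all be evaluated on it and the composite estimate reused. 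It is worth stressing that the argument uses only the discrete asymptotics \eqref{eq:asympH} (through Proposition~\ref{prop:natural} and Lemma~\ref{lem:R}) and never the integral-side asymptotics \eqref{eq:asympLi}.
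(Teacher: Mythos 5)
Your proposal is correct and follows essentially the same route as the paper: reduce via Theorem~\ref{thm:MSW} and Proposition~\ref{prop:natural} to $Z_N^{\natural}$, apply Proposition~\ref{prop:sh}, and transfer back, using Lemma~\ref{lem:R}~\ref{it:log} to absorb the cross terms. The extra bookkeeping you flag (the a priori bound $Z_N^{\natural}(w)=O(\log^{\bullet}N)$ and the fact that $w_1\sh w_0\in\fH^1$) is exactly what the paper's chain of equalities implicitly relies on.
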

\begin{proof}
By using tools prepared in the previous section, we compute
\begin{align*}
Z_N(w_1*w_0)&=Z_N(w_1)Z_N(w_0)\\
&=Z_N^{\flat}(w_1)Z_N^{\flat}(w_0) \tag{Thm~\ref{thm:MSW}}\\
&=\bigl(Z_N^{\natural}(w_1)+O(N^{-1}\log^{\bullet}N)\bigr)\bigl(Z_N^{\natural}(w_0)+O(N^{-1}\log^{\bullet}N)\bigr) \tag{Prop~\ref{prop:natural}}\\
&=Z_N^{\natural}(w_1\sh w_0)+O(N^{-1}\log^{\bullet}N) \tag{Prop~\ref{prop:sh} + Lem~\ref{lem:R} \ref{it:log}}\\
&=Z_N^{\flat}(w_1\sh w_0)+O(N^{-1}\log^{\bullet}N) \tag{Prop~\ref{prop:natural}}\\
&=Z_N(w_1\sh w_0)+O(N^{-1}\log^{\bullet}N). \tag{Thm~\ref{thm:MSW}}
\end{align*}
This completes the proof.
\end{proof}
\begin{proof}[Proof of the EDSR~\eqref{eq:EDSR*} from Theorem~$\ref{thm:main}$]
By the asymptotic formula~\eqref{eq:asympH}, we have for some $n$,
\[
Z_N(w_1*w_0-w_1\sh w_0)=\sum_{i=0}^nZ(W_i)\cdot(\log N+\gamma)^i+O(N^{-1}\log^{\bullet}N),\quad \text{as } N\to\infty,
\]
where $W_i\in\fH^0$ for all $i\in[n]$ and $W_0=\mathrm{reg}_*(w_1*w_0-w_1\sh w_0)$.
Note that $n$ and each $W_i$ are independent of $N$.
By Theorem~\ref{thm:main}, we have
\[
\sum_{i=0}^nZ(W_i)\cdot(\log N+\gamma)^i=o(1),\quad \text{as } N\to\infty.
\]
Hence all $Z(W_i)$ should be $0$ and in particular $Z(W_0)=Z(\mathrm{reg}_{\ast}(w_1 \ast w_0 - w_1 \sh w_0)) = 0$.
\renewcommand{\qedsymbol}{Q.E.D.}
\end{proof}

\end{document}